\numberwithin{equation}{section}
\newtheorem {theorem} {Theorem}[section]
\newtheorem {proposition} [theorem]{Proposition}
\newtheorem {corollary} [theorem]{Corollary}
\newtheorem {definition} [theorem]{Definition}
\newtheorem {lemma}  [theorem]{Lemma}
\newcommand{\supp}{{\rm supp}}
\begin{document}
\setlength{\parindent}{4ex} \setlength{\parskip}{1ex}
\setlength{\oddsidemargin}{12mm} \setlength{\evensidemargin}{9mm}

\title{{Existence of Solutions for the Debye-H\"{u}ckel System with Low Regularity
Initial Data}}
\author{Jihong Zhao$^{1}$, \ Qiao Liu$^{2}$,  \ and\ Shangbin Cui$^{2}$\footnote{E-mail: zhaojihong2007@yahoo.com.cn;
liuqao2005@163.com; cuisb3@yahoo.com.cn.}\\
[0.2cm] {\small $^{1}$ Department of Mathematics, Northwest A\&F
University, Yangling,}\\
{\small Shaanxi 712100, People's Republic of China}\\
[0.2cm] {\small $^{2}$ Department of Mathematics, Sun Yat-sen
University, Guangzhou, }\\
{\small Guangdong 510275, People's Republic of China}}
\date{}
\maketitle

\begin{abstract}
In this paper we study existence of solutions for the Cauchy problem
of the Debye-H\"{u}ckel system with low regularity initial data. By
using the Chemin-Lerner time-space estimate for the heat equation,
we prove that there exists a unique local solution if the initial
data belongs to the Besov space $\dot{B}^{s}_{p,q}(\mathbb{R}^{n})$
for $-\frac{3}{2}<s\leq-2+\frac{n}{2}$, $p=\frac{n}{s+2}$ and $1\leq
q\leq \infty$, and furthermore, if the initial data is sufficiently
small then the solution is global. This result improves the
regularity index of the initial data space in previous results on
this model.

{\bf Keywords:} Debye-H\"{u}ckel system; low regularity; global
existence.

{\bf Mathematics Subject Classification 2010: \,}  35K45, 35Q99
\end{abstract}

\section{\bf Introduction}

In this paper we study the following Debye-H\"{u}ckel system arising
from the theory of electrolytes (\cite{DH23}):
\begin{equation}\label{eq1.1}
\begin{cases}
  \partial_{t}v=\nabla\cdot(\nabla v-v\nabla \phi)\quad &\mbox{in}\ \
  \mathbb{R}^n\times(0,\infty),\\
  \partial_{t}w=\nabla\cdot(\nabla w+w\nabla\phi)\quad &\mbox{in}\ \
  \mathbb{R}^n\times(0,\infty),\\
  \Delta \phi=v-w\quad &\mbox{in}\ \
  \mathbb{R}^n\times(0,\infty),\\
  v(x,0)=v_0(x),\quad
  w(x,0)=w_0(x) \ \  &\mbox{in}\ \ \mathbb{R}^n,
\end{cases}
\end{equation}
where $v$ and $w$ denote the densities of the electron and the hole,
respectively, in an electrolytes, and $\phi$ denotes the electric
potential.

Mathematical analysis of the Debye-H\"{u}ckel system was first
focused on the initial boundary value problems in 1980's, and some
results related to the global existence, uniqueness and regularity
of classical solutions and the asymptotic stability of stationary
solutions were obtained by using the Green function, the
Poincar\'{e} inequality and the standard maximum principle of
equations of parabolic type; see \cite{G85}, \cite{GG86}, \cite{M74}
and \cite{S83} for more details. In 1994, Biler, Hebisch and
Nadzieja in \cite{BHN94} considered the no-flux boundary problem of
\eqref{eq1.1}, and considered global and local existence of weak
solutions, convergence rate estimates to stationary solutions of
time-dependent solutions, and for further studies related to this
topic we refer the reader to see \cite{BMV04}, \cite{BD00} and the
references therein.

In 1999,  Karch in \cite{K99} proved existence and uniqueness of
solutions of the problem \eqref{eq1.1} for initial data in the Besov
space $\dot{B}^{s}_{p, \infty}(\mathbb{R}^n)$ with $-1<s<0$ and
$p=\frac{n}{s+2}$. Note that similar results for initial data in the
Lebesgue and Sobolev spaces was established only recently, see the
work of Kurokiba and Ogawa \cite{KO08}. In \cite{OS08}, Ogawa and
Shimizu considered existence of global solutions of the problem
\eqref{eq1.1} for small initial data in a two-dimensional critical
Hardy space. The purpose of this paper is to prove existence of
solutions for \eqref{eq1.1} with initial data in the Besov space
$\dot{B}^{s}_{p,q}(\mathbb{R}^{n})$ of indices
$-\frac{3}{2}<s\leq-2+\frac{n}{2}$, $p=\frac{n}{s+2}$ and $1\leq
q\leq \infty$. This result improves the corresponding result of
Karch obtained in \cite{K99}. It shows that the Debye-H\"{u}ckel
system has a better property than the Navier-Stokes equations in
regard to existence of solutions, since for that equation there is
no existence result for initial data in a space with regularity
index $s$ smaller than $-1$. In fact, the nonlinear term of the
system \eqref{eq1.1} seems to be closer to the quadratic nonlinear
heat equation ($\sim u^2$) than to the Navier-Stokes equations
($\sim u\cdot \nabla u$), and our main result (see Theorem
\ref{th1.1} below) also holds  for the quadratic nonlinear heat
equation and is even new for this equation. Main tools used to get
our main result are the Chemin-Lerner space $\mathfrak{L}^{r}(0,T;
\dot{B}^{-2+n/p+2/r}_{p,q})$ and some related estimates (see
Definition 2.1 and Propositions 2.2 and 2.3 in Section 2).

Note that from the third equation in (1.1) we have
\begin{equation}\label{eq1.2}
  \phi=(-\Delta)^{-1}(w-v):=E\ast(w-v),
\end{equation}
where $E(x)=-\frac{1}{2\pi}\log|x|$ for $n=2$ and
$E(x)=\frac{1}{4}\pi^{-n/2}\Gamma(\frac{n}{2}-1)|x|^{-(n-2)}$ for
$n\ge 3$, so that we can eliminate $\phi$ from \eqref{eq1.1} and
obtain
\begin{equation}\label{eq1.3}
\begin{cases}
  \partial_{t}v-\Delta v=-\nabla\cdot(v\nabla(-\Delta)^{-1}(w-v))\quad &\mbox{in}\ \
  \mathbb{R}^n\times(0,\infty),\\
  \partial_{t}w-\Delta w=\nabla\cdot(w\nabla(-\Delta)^{-1}(w-v))\quad &\mbox{in}\ \
  \mathbb{R}^n\times(0,\infty),\\
  v(x,0)=v_0(x),\quad
  w(x,0)=w_0(x) \ \  &\mbox{in}\ \ \mathbb{R}^n.
\end{cases}
\end{equation}
Hence, we only need to consider this equivalent problem. We now give
the precise statement of our main result, and  for simplicity, we
use $(v, w)\in \mathcal{X}$ to denote  $(v, w)\in
\mathcal{X}\times\mathcal{X}$ for a Banach space $\mathcal{X}$.

\begin{theorem}\label{th1.1}
Let $n\geq2$, $1\leq q\leq \infty$ and $2\leq p<2n$. Suppose that
$(v_{0}, w_{0})\in \dot{B}^{-2+n/p}_{p,q}(\mathbb{R}^{n})$. There
exists $T>0$ such that the problem \eqref{eq1.1} has a unique
solution
\begin{equation}\label{eq1.4}
  (v, w)\in \underset{1<r\leq\infty}{\cap}\mathfrak{L}^{r}(0, T;
  \dot{B}^{-2+n/p+2/r}_{p,q}(\mathbb{R}^{n})).
\end{equation}
 Moreover, if
$(v_{0},w_{0})$ belongs to the closure of
$\mathcal{S}(\mathbb{R}^n)$ in
$\dot{B}^{-2+n/p}_{p,q}(\mathbb{R}^{n})$, then $(v, w)\in C([0,T),
\dot{B}^{-2+n/p}_{p,q}(\mathbb{R}^{n}))$. Besides, there exists
$\varepsilon>0$ such that if $\|(v_{0},
w_{0})\|_{\dot{B}^{-2+n/p}_{p,q}}\leq\varepsilon$, then the above
assertion holds for $T=\infty$, i.e., the solution $(v, w)$ is
global. Furthermore, if $(v,w)$ and $(\tilde{v}, \tilde{w})$ are two
solutions of \eqref{eq1.1} with initial data $(v_0, w_0)$ and
$(\tilde{v}_0, \tilde{w}_0)$, respectively, then there exists a
universal constant $C>0$ such that for any $1<r\leq\infty$, we have
\begin{equation}\label{eq1.5}
  \|(v-\tilde{v}, w-\tilde{w})\|_{\mathfrak{L}^{r}(0,T; \dot{B}^{-2+n/p+2/r}_{p,q})}\le
  C\|(v_0-\tilde{v}_0,
  w_0-\tilde{w}_0)\|_{\dot{B}^{-2+n/p}_{p,q}}.
\end{equation}
\end{theorem}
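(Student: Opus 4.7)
The plan is to convert \eqref{eq1.3} into an integral equation by Duhamel's principle and close a contraction argument in a Chemin--Lerner space. Writing
\begin{equation*}
v(t)=e^{t\Delta}v_0-\int_0^t e^{(t-s)\Delta}\nabla\cdot\bigl(v\,\nabla(-\Delta)^{-1}(w-v)\bigr)(s)\,ds,
\end{equation*}
and an analogous relation for $w$, identifies the system with a fixed point equation $(v,w)=\mathcal{L}(v_0,w_0)+\mathcal{B}((v,w),(v,w))$, where $\mathcal{L}$ is the heat propagator and $\mathcal{B}$ is a bilinear operator. I would first work in the restricted space
\[
E_T:=\mathfrak{L}^\infty\bigl(0,T;\dot B^{-2+n/p}_{p,q}\bigr)\cap \mathfrak{L}^{2}\bigl(0,T;\dot B^{-1+n/p}_{p,q}\bigr),
\]
the two endpoints being the natural ones for a quadratic nonlinearity, and then recover membership in every $\mathfrak{L}^{r}(0,T;\dot B^{-2+n/p+2/r}_{p,q})$ \emph{a posteriori} from Proposition~2.3 once the fixed point is known to lie in $E_T$.

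The linear part is handled by Proposition~2.2, which gives $\|\mathcal{L}(v_0,w_0)\|_{E_T}\lesssim\|(v_0,w_0)\|_{\dot B^{-2+n/p}_{p,q}}$. Applied to the Duhamel integral (with the divergence absorbed as a gain of one derivative in the source), the same proposition reduces the bilinear bound to a product estimate of the form
\[
\bigl\|\mathcal{B}((v,w),(\tilde v,\tilde w))\bigr\|_{E_T}\lesssim\bigl\|v\,\nabla(-\Delta)^{-1}(\tilde w-\tilde v)\bigr\|_{\mathfrak{L}^{1}(0,T;\dot B^{-1+n/p}_{p,q})}+\text{(symmetric terms)}.
\]
Since $\nabla(-\Delta)^{-1}$ is a homogeneous multiplier of order $-1$, it carries $\dot B^{-1+n/p}_{p,q}$ into $\dot B^{n/p}_{p,q}$, and together with the H\"older relation $1/2+1/2=1$ in time one aims at
\[
\|fg\|_{\mathfrak{L}^{1}(\dot B^{-1+n/p}_{p,q})}\lesssim\|f\|_{\mathfrak{L}^{2}(\dot B^{-1+n/p}_{p,q})}\,\|g\|_{\mathfrak{L}^{2}(\dot B^{n/p}_{p,q})}.
\]

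This product estimate is the main obstacle: it must be verified at the endpoint regularity for the entire range $2\le p<2n$. The plan is to appeal to Bony's paraproduct decomposition, where the lower bound $p\ge 2$ is precisely what allows the low--high paraproduct to be summed in $\ell^{q}$ uniformly, while the upper bound $p<2n$ (equivalently $s=-2+n/p>-3/2$) keeps the sum of space regularities positive so that the remainder term in Bony's formula converges. Once this is done, bilinear continuity of $\mathcal{B}$ on $E_T$ follows at once.

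With the bilinear estimate established, a standard fixed point argument on a small ball of $E_T$ produces local existence and uniqueness: for arbitrary data one chooses $T$ so small that $\|\mathcal{L}(v_0,w_0)\|_{E_T}$ lies below the threshold dictated by the bilinear norm (such a $T$ exists by dominated convergence applied to the Littlewood--Paley decomposition of $v_0,w_0$), and for $\|(v_0,w_0)\|_{\dot B^{-2+n/p}_{p,q}}\le\varepsilon$ the same threshold is met with $T=\infty$. The Lipschitz estimate \eqref{eq1.5} is obtained by subtracting the integral equations for $(v,w)$ and $(\tilde v,\tilde w)$ and reapplying the bilinear bound to the difference. The continuity in time under the density hypothesis is obtained by approximating $(v_0,w_0)$ in $\dot B^{-2+n/p}_{p,q}$ by Schwartz data, for which both $t\mapsto e^{t\Delta}(v_0,w_0)$ and the Duhamel integral are strongly continuous into $\dot B^{-2+n/p}_{p,q}$, and passing to the limit using the uniform Chemin--Lerner bound.
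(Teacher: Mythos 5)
Your overall architecture (Duhamel, bilinear fixed point in a Chemin--Lerner space, Lipschitz estimate by subtracting the integral equations) matches the paper's, but there is a genuine gap at the heart of your argument: the product estimate you single out as ``the main obstacle'' sits exactly at a forbidden endpoint and is false in general. You place $v\in\mathfrak{L}^{2}(0,T;\dot B^{-1+n/p}_{p,q})$ and $\nabla(-\Delta)^{-1}(w-v)\in\mathfrak{L}^{2}(0,T;\dot B^{n/p}_{p,q})$, so the second factor has spatial regularity exactly $n/p$. The product law $\dot B^{s_1}_{p,q}\times\dot B^{s_2}_{p,q}\to\dot B^{s_1+s_2-n/p}_{p,q}$ (Lemma 2.2 of the paper, and the Runst--Sickel estimates behind it) requires $s_1,s_2<n/p$ \emph{strictly} unless $q=1$: for $q>1$ the space $\dot B^{n/p}_{p,q}$ does not embed into $L^\infty$, the partial sums $\|S_{j-1}g\|_{L^\infty}$ are not uniformly bounded, and the paraproduct $T_g f$ cannot be summed. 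H\"older in time does not repair this, since the obstruction is purely spatial and occurs at each fixed dyadic block. Your stated reasons for the hypotheses are also misplaced: $p\ge2$ enters through the remainder term of Bony's decomposition (Bernstein from $L^{p/2}$, which needs $p/2\ge1$), not through summing the low--high paraproduct. The paper's proof is designed precisely to dodge this endpoint: it runs the contraction in $\mathfrak{L}^{r_1}(0,T;\dot B^{-2+n/p+2/r_1}_{p,q})$ for a single $r_1$ with $r_1>2$ strictly and $\frac{n}{p}+\frac{2}{r_1}>\frac32$, so that both factors have regularities $s_1=-2+\frac{n}{p}+\frac{2}{r_1}$ and $s_2=-1+\frac{n}{p}+\frac{2}{r_1}$ strictly below $\frac{n}{p}$ while $s_1+s_2>0$; the condition $p<2n$ is exactly what guarantees such an $r_1$ exists. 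The full range $1<r\le\infty$ in \eqref{eq1.4} is then recovered a posteriori from Proposition 2.3, as you intended.

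A secondary problem is your local-existence mechanism for large data. The free-evolution norm $\|e^{t\Delta}(v_0,w_0)\|_{\mathfrak{L}^{\infty}(0,T;\dot B^{-2+n/p}_{p,q})}$ is comparable to $\|(v_0,w_0)\|_{\dot B^{-2+n/p}_{p,q}}$ and does not shrink as $T\to0^+$, so smallness must be extracted only from a component with finite time-exponent; and even there, the dominated-convergence argument you invoke fails when $q=\infty$, which is included in the theorem. The paper handles this by splitting the data into high and low frequencies, absorbing the high frequencies into the smallness of their Besov norm and the low frequencies via Bernstein's inequality together with the factor $T^{1/r_1}$. Without the endpoint product estimate (which would require $q=1$) your scheme does not close, so the argument as written does not prove the theorem.
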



It is easy to verify that \eqref{eq1.1} is invariant under the
scaling $v_{\lambda}(x,t)=\lambda^2 v(\lambda x, \lambda^2t)$,
$w_{\lambda}(x,t)=\lambda^2 w(\lambda x, \lambda^2t)$ and
$\phi_{\lambda}(x,t)=\phi(\lambda x, \lambda^2t)$. Hence, as a
standard practice, we have the following existence result for {\em
self-similar solution} of \eqref{eq1.1}:

\begin{corollary}\label{co1.2}
Let $n\ge2$ and $\frac{n}{2}\leq p<2n$. Suppose that $(v_{0},
w_{0})\in \dot{B}^{-2+n/p}_{p,\infty}(\mathbb{R}^{n})$ and
$\|(v_{0}, w_{0})\|_{\dot{B}^{-2+n/p}_{p,q}}\leq\varepsilon$, where
$\varepsilon$ is as above. Suppose furthermore that $v_{0}, w_{0}$
are homogeneous of degree $-2$, i.e., they satisfy the relations
$v_{0}(x)=\lambda^{2}v_{0}(\lambda x)$ and $w_{0}(x)
=\lambda^{2}w_{0}(\lambda x)$ for all $x\in\mathbb{R}^{n}$ and
$\lambda>0$. Then the unique global solution ensured by Theorem
\ref{th1.1} is a self-similar solution, i.e., it satisfies the
following condition:
\begin{equation*}
  v(x,t)=\lambda^2 v(\lambda x, \lambda^2t),\ \
  w(x,t)=\lambda^2 w(\lambda x, \lambda^2t),\ \ \phi(x,t)=\phi(\lambda x,
  \lambda^2t).
\end{equation*}
\end{corollary}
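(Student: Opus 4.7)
The plan is to combine the scaling invariance of the Debye-H\"{u}ckel system with the uniqueness provided by Theorem \ref{th1.1}. The idea is that if the initial data is homogeneous of degree $-2$, then the rescaled solution has the same initial data as the original solution, so uniqueness forces them to coincide.

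First I would check that the critical Besov space $\dot{B}^{-2+n/p}_{p,q}(\mathbb{R}^n)$ is invariant under the dilation $D_\lambda f(x):=\lambda^{2}f(\lambda x)$. This is a routine computation: in general $\|f(\lambda\cdot)\|_{\dot{B}^{s}_{p,q}}=\lambda^{s-n/p}\|f\|_{\dot{B}^{s}_{p,q}}$, so $\|D_\lambda f\|_{\dot{B}^{s}_{p,q}}=\lambda^{2+s-n/p}\|f\|_{\dot{B}^{s}_{p,q}}$, which is scale-invariant precisely when $s=-2+n/p$. By the homogeneity hypothesis $v_0(x)=\lambda^2 v_0(\lambda x)$ and $w_0(x)=\lambda^2 w_0(\lambda x)$, the rescaled initial data $(v_{0,\lambda},w_{0,\lambda}):=(D_\lambda v_0,D_\lambda w_0)$ equals $(v_0,w_0)$ itself, and in particular retains the smallness bound $\|(v_{0,\lambda},w_{0,\lambda})\|_{\dot{B}^{-2+n/p}_{p,q}}\le\varepsilon$.

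Next I would verify that if $(v,w,\phi)$ solves \eqref{eq1.1} on $\mathbb{R}^n\times(0,\infty)$, then so does the family
\begin{equation*}
 v_\lambda(x,t)=\lambda^{2}v(\lambda x,\lambda^{2}t),\quad
 w_\lambda(x,t)=\lambda^{2}w(\lambda x,\lambda^{2}t),\quad
 \phi_\lambda(x,t)=\phi(\lambda x,\lambda^{2}t),
\end{equation*}
with initial data $(v_{0,\lambda},w_{0,\lambda})$. This is a direct change of variables using that each term in \eqref{eq1.1} scales with weight $\lambda^{4}$: both $\partial_t v_\lambda$ and $\Delta v_\lambda$ have weight $\lambda^{4}$, and $\nabla\cdot(v_\lambda\nabla\phi_\lambda)$ also has weight $\lambda^{4}$, while $\Delta\phi_\lambda=\lambda^{2}(v_\lambda-w_\lambda)/\lambda^{2}=v_\lambda-w_\lambda$ at the rescaled point, so all three PDEs are preserved. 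By the homogeneity of the initial data, $(v_\lambda,w_\lambda)$ and $(v,w)$ are then two global solutions with the same initial data.

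Finally, I would invoke the scale invariance of the solution space: the Chemin-Lerner norm $\|\cdot\|_{\mathfrak{L}^{r}(0,\infty;\dot{B}^{-2+n/p+2/r}_{p,q})}$ is also invariant under $(v,w)\mapsto(v_\lambda,w_\lambda)$ because the regularity index $-2+n/p+2/r$ compensates precisely for the factors $\lambda^{2}$ and $\lambda^{2/r}$ produced by the spatial and temporal rescaling respectively. Hence $(v_\lambda,w_\lambda)$ lies in the same functional framework used in Theorem \ref{th1.1}, and the uniqueness statement \eqref{eq1.5} (applied to the initial difference, which vanishes) forces $v_\lambda=v$ and $w_\lambda=w$ for every $\lambda>0$; the identity for $\phi$ then follows from \eqref{eq1.2}. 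The only real point that needs care is this scale invariance of the Chemin-Lerner norm over $(0,\infty)$, which is why the smallness assumption (ensuring a \emph{global} solution) is essential; without it, a change of the time interval from $(0,T)$ to $(0,T/\lambda^{2})$ would break the argument.
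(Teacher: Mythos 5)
Your proof is correct and is precisely the standard scaling-plus-uniqueness argument that the paper invokes without writing out (it presents Corollary \ref{co1.2} as following ``as a standard practice'' from the scale invariance of \eqref{eq1.1} and Theorem \ref{th1.1}). The only point worth flagging is that for non-dyadic $\lambda$ the dilation identities for the $\dot{B}^{-2+n/p}_{p,q}$ and Chemin--Lerner norms hold only up to absolute constants rather than exactly, but this is harmless here because all you need is that $(v_\lambda,w_\lambda)$ belongs to the solution class on $(0,\infty)$ so that the uniqueness estimate \eqref{eq1.5} applies to force $v_\lambda=v$, $w_\lambda=w$, and then $\phi_\lambda=\phi$ via \eqref{eq1.2}.
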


In the next section we give the proof of Theorem 1.1.

\section{The proof of Theorem 1.1}

We first recall some basic notions and preliminary results used in
the proof of Theorem 1.1. Let $\mathcal{S}(\mathbb{R}^{n})$ be the
Schwartz space and $\mathcal{S}'(\mathbb{R}^{n})$ be its dual. Given
$f\in\mathcal{S}(\mathbb{R}^{n})$, the Fourier transform of it,
$\mathcal{F}(f)=\widehat{f}$, is defined by
$$
  \mathcal{F}(f)(\xi)=\widehat{f}(\xi)=\frac{1}{(2\pi)^{n/2}}\int_{\mathbb{R}^{n}}f(x)e^{-ix\cdot\xi}dx.
$$
Let $\mathcal{D}_{1}=\{\xi\in\mathbb{R}^{n},\
|\xi|\leq\frac{4}{3}\}$ and
$\mathcal{D}_{2}=\{\xi\in\mathbb{R}^{n},\ \frac{3}{4}\leq
|\xi|\leq\frac{8}{3}\}$. Choose two non-negative functions $\phi,
\psi\in\mathcal{S}(\mathbb{R}^{n})$ supported, respectively, in
$\mathcal{D}_{1}$ and $\mathcal{D}_{2}$ such that
\begin{align*}
  \psi(\xi)+\sum_{j\geq0}\phi(2^{-j}\xi)=1, \ \ \xi\in\mathbb{R}^{n},\\
  \sum_{j\in\mathbb{Z}}\phi(2^{-j}\xi)=1, \ \
  \xi\in\mathbb{R}^{n}\backslash\{0\}.
\end{align*}
We denote $\phi_{j}(\xi)=\phi(2^{-j}\xi)$, $h=\mathcal{F}^{-1}\phi$
and $\tilde{h}=\mathcal{F}^{-1}\psi$, where $\mathcal{F}^{-1}$ is
the inverse Fourier transform. Then the dyadic blocks $\Delta_{j}$
and $S_{j}$ can be defined as follows:
\begin{align*}
  \Delta_{j}f=\phi(2^{-j}D)f=2^{jn}\int_{\mathbb{R}^{n}}h(2^{j}y)f(x-y)dy,\\
  S_{j}f=\psi(2^{-j}D)f=2^{jn}\int_{\mathbb{R}^{n}}\tilde{h}(2^{j}y)f(x-y)dy.
\end{align*}
Here $D=(D_1, D_2, \cdots, D_n)$ and $D_j=i^{-1}\partial_{x_j}$
($i^{2}=-1$). The set $\{\Delta_{j}, S_{j}\}_{j\in\mathbb{Z}}$ is
called the Littlewood-Paley decomposition. Formally,
$\Delta_{j}=S_{j}-S_{j-1}$ is a frequency projection to the annulus
$\{|\xi|\sim 2^{j}\}$, and $S_{j}=\sum_{k\leq j-1}\Delta_{k}$ is a
frequency projection to the ball $\{|\xi|\leq 2^{j}\}$. For more
details, please reader to \cite{C98} and \cite{L02}.  Let
$\mathcal{Z}(\mathbb{R}^{n})=\big\{f\in \mathcal{S}(\mathbb{R}^{n}):
\ \ \partial^{\alpha}\widehat{f}(0)=0, \
\forall\alpha\in(\mathbb{N}\cup\{0\})^{n}\big\}$, and denote by
$\mathcal{Z}'(\mathbb{R}^{n})$ the dual of it. Recall that for
$s\in\mathbb{R}$ and $(p,q)\in[1, \infty]\times[1, \infty]$, the
homogeneous Besov space $\dot{B}^{s}_{p,q}(\mathbb{R}^{n})$ is
defined by
\begin{equation*}
  \dot{B}^{s}_{p,q}(\mathbb{R}^{n})=\big\{f\in \mathcal{Z}'(\mathbb{R}^{n}):\ \
  \|f\|_{\dot{B}^{s}_{p,q}}<\infty\big\},
\end{equation*}
where
\begin{equation*}
  \|f\|_{\dot{B}^{s}_{p,q}}=
\begin{cases}
  \Big(\sum_{j\in\mathbb{Z}}2^{jsq}\|\Delta_{j}f\|_{L^{p}}^{q}\Big)^{1/q}\
  \ \text{for}\ \ 1\leq q<\infty,\\
  \sup_{j\in\mathbb{Z}}2^{js}\|\Delta_{j}f\|_{L^{p}}\ \ \ \ \ \ \ \ \ \text{for}\
  \ q=\infty.
\end{cases}
\end{equation*}
It is well-known that if either $s<\frac{n}{p}$ or $s=\frac{n}{p}$
and $q=1$, then
$(\dot{B}^{s}_{p,q}(\mathbb{R}^{n}),\|\cdot\|_{\dot{B}^{s}_{p,q}})$
is a Banach space. Moreover, if we denote $D^s
f=\mathcal{F}^{-1}(|\xi|^{s}\mathcal{F}(f))$, then for any function
$f$ defined on $\mathbb{R}^{n}\backslash\{0\}$ which is smooth and
homogeneous of degree $k$, the corresponding pseudo-differential
operator $f(D)$ is a bounded linear map from
$\dot{B}^{s}_{p,q}(\mathbb{R}^{n})$ to
$\dot{B}^{s-k}_{p,q}(\mathbb{R}^{n})$. Besides, there exists a
constant $C$ depending only on the dimension $n$ such that for any
$s>0$, $j\in\mathbb{Z}$ and $1\leq p\leq q\leq\infty$, there holds
the following Bernstein inequality:
\begin{equation}\label{eq2.1}
  \supp \widehat{f}\subset\{|\xi|\leq2^{j}\}\ \ \Longrightarrow\ \
  \|D^{s}f\|_{L^{q}}\leq
  C2^{js+jn(1/p-1/q)}\|f\|_{L^{p}}.
\end{equation}
We now recall the definition of the Chemin-Lerner space
$\mathfrak{L}^{r}(0,T; \dot{B}^{s}_{p,q}(\mathbb{R}^{n}))$:

\begin{definition}\label{de2.1} {\em (\cite{C98})} Let $s\in \mathbb{R}$, $1\leq p, q, r\leq\infty$, and
$0<T\leq\infty$ be fixed. The Chemin-Lerner space is defined by
\begin{equation*}
  \mathfrak{L}^{r}(0,T; \dot{B}^{s}_{p,q}(\mathbb{R}^{n})):=\{f\in
  \mathcal{D}'((0,T), \mathcal{Z}'(\mathbb{R}^{n})):\ \
  \|f\|_{\mathfrak{L}^{r}(0,T;
  \dot{B}^{s}_{p,q}(\mathbb{R}^{n}))}<\infty\},
\end{equation*}
where
$
  \|f\|_{\mathfrak{L}^{r}(0,T; \dot{B}^{s}_{p,q})}=\big(\sum_{j\in\mathbb{Z}}2^{jsq}\|\Delta_{j}f\|_{L^{r}(0,T;
  L^{p})}^{q}\big)^{1/q}.
$
\end{definition}

 We define the usual space $L^{r}(0,T;
\dot{B}^{s}_{p,q}(\mathbb{R}^{n}))$ associated with the norm
\begin{equation*}
  \|f\|_{L^{r}(0,T; \dot{B}^{s}_{p,q})}=\Big(\int_{0}^{T}\Big(\sum_{j\in\mathbb{Z}}2^{jsq}\|\Delta_{j}f\|_{
  L^{p}}^{q}\Big)^{r/q}dt\Big)^{1/r}.
\end{equation*}
By the  Minkowski inequality, it is readily to verify that
\begin{equation*}
\begin{cases}
  \|f\|_{\mathfrak{L}^{r}(0,T; \dot{B}^{s}_{p,q})}\leq\|f\|_{L^{r}(0,T;
  \dot{B}^{s}_{p,q})} \ \ \  \text{if}\ \ \  r\leq q,\\
  \|f\|_{L^{r}(0,T; \dot{B}^{s}_{p,q})}\leq \|f\|_{\mathfrak{L}^{r}(0,T;
  \dot{B}^{s}_{p,q})} \ \ \ \text{if} \ \ \  q\leq r.
\end{cases}
\end{equation*}

In our discussion we shall use two basic results related to this
space. The first one is concerned with the product of two functions
in this space and reads as follows:

\begin{lemma}\label{le2.2} {\em (\cite{D05, RS96})}
Let $1\leq p$, $q$, $r$, $r_{1}$, $r_{2}\leq \infty$, $s_{1}$,
$s_{2}<\frac{n}{p}$, $s_{1}+s_{2}>0$ and
$\frac{1}{r}=\frac{1}{r_{1}}+\frac{1}{r_{2}}$. Then there exists a
positive constant $C$ depending only on $s_{1}, s_{2}, p, q, r,
r_{1}, r_{2}$ and $n$ such that
\begin{equation}\label{eq2.2}
  \|fg\|_{\mathfrak{L}^{r}(0,T; \dot{B}^{s_{1}+s_{2}-n/p}_{p,
  q})}\leq C\|f\|_{\mathfrak{L}^{r_{1}}(0,T; \dot{B}^{s_{1}}_{p,
  q})}\|g\|_{\mathfrak{L}^{r_{2}}(0,T; \dot{B}^{s_{2}}_{p,
  q})}.
\end{equation}
\end{lemma}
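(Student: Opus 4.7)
The plan is to prove the product estimate via Bony's paraproduct decomposition, which for any tempered distributions $f,g$ reads
\begin{equation*}
fg = T_f g + T_g f + R(f,g),
\end{equation*}
where $T_f g = \sum_{j} S_{j-1} f \, \Delta_j g$ is the paraproduct and $R(f,g) = \sum_{|j-k|\le 1} \Delta_j f \, \Delta_k g$ is the remainder. The standard spectral-localization observations are that $\Delta_j(S_{k-1}f\,\Delta_k g) = 0$ unless $|j-k|\le N_0$ for some fixed $N_0$, and $\Delta_j(\Delta_k f\,\Delta_{k'} g) = 0$ unless $2^k \gtrsim 2^j$. I would treat each of the three pieces separately and then combine them by the triangle inequality in $\mathfrak{L}^r(0,T;\dot B^{s_1+s_2-n/p}_{p,q})$.

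For the paraproduct $T_f g$, I would fix $j$ and estimate
\begin{equation*}
\|\Delta_j(T_f g)(t)\|_{L^p} \le C \sum_{|k-j|\le N_0} \|S_{k-1}f(t)\|_{L^\infty}\,\|\Delta_k g(t)\|_{L^p}.
\end{equation*}
The Bernstein inequality \eqref{eq2.1} together with the assumption $s_1 < n/p$ gives $\|S_{k-1}f(t)\|_{L^\infty} \lesssim \sum_{\ell\le k-2} 2^{\ell n/p}\|\Delta_\ell f(t)\|_{L^p} \lesssim \sum_{\ell \le k-2} 2^{\ell(n/p - s_1)} (2^{\ell s_1}\|\Delta_\ell f(t)\|_{L^p})$, a convolution in $\ell$ with a summable geometric sequence. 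Then Hölder in time with $1/r = 1/r_1 + 1/r_2$ converts the $L^{r}(0,T;L^p)$ norm into the product of the two $L^{r_i}$ norms, after which Young's inequality in the frequency index $j$ and the $\ell^q$ summation yield the desired bound $\|T_f g\|_{\mathfrak{L}^r(0,T;\dot B^{s_1+s_2-n/p}_{p,q})} \lesssim \|f\|_{\mathfrak{L}^{r_1}(\dot B^{s_1}_{p,q})}\|g\|_{\mathfrak{L}^{r_2}(\dot B^{s_2}_{p,q})}$. The term $T_g f$ is handled symmetrically using $s_2 < n/p$.

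For the remainder, I would use spectral support to write
\begin{equation*}
\Delta_j R(f,g)(t) = \Delta_j\Big(\sum_{k\ge j-N_1}\sum_{|k-k'|\le 1}\Delta_k f(t)\,\Delta_{k'} g(t)\Big),
\end{equation*}
apply Bernstein (using $p \ge 1$) and Hölder in $x$ to bound
\begin{equation*}
\|\Delta_j R(f,g)(t)\|_{L^p} \le C\,2^{jn/p}\sum_{k \ge j-N_1} \|\Delta_k f(t)\|_{L^p}\|\tilde\Delta_k g(t)\|_{L^p},
\end{equation*}
and then insert the factor $2^{j(s_1+s_2-n/p)}$. The sum becomes $\sum_{k\ge j-N_1} 2^{-(k-j)(s_1+s_2)} \cdot 2^{k s_1}\|\Delta_k f(t)\|_{L^p}\cdot 2^{k s_2}\|\tilde\Delta_k g(t)\|_{L^p}$, a discrete convolution whose kernel $2^{-(k-j)(s_1+s_2)}\mathbf{1}_{\{k\ge j-N_1\}}$ lies in $\ell^1$ precisely because $s_1+s_2 > 0$. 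Hölder in time (with $1/r = 1/r_1+1/r_2$) and Young's inequality in $j$ then finish the bound in the $\mathfrak{L}^r(\dot B^{s_1+s_2-n/p}_{p,q})$ norm.

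The main obstacle is organizing the estimate for the remainder so that the condition $s_1+s_2 > 0$ is used at exactly the right point to close the $\ell^q$-summation without losing information; the paraproduct pieces are easy because spectral localization reduces them almost to diagonal sums, but the remainder genuinely requires the convolution-in-frequency argument above, and one must interchange the time integration with the $\ell^q$ summation carefully (this is precisely the advantage of the Chemin-Lerner norm over $L^r(0,T;\dot B^s_{p,q})$, which allows Minkowski's inequality to be applied in the favorable direction when commuting $L^r_t$ with the $\ell^q_j$ summation of Littlewood-Paley pieces).
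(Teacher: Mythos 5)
Your argument is correct on the range where the lemma is actually used, but it is a genuinely different route from the paper's: the paper gives no proof of Lemma \ref{le2.2} at all, instead quoting the stationary product law $\|fg\|_{\dot{B}^{s_1+s_2-n/p}_{p,q}}\leq C\|f\|_{\dot{B}^{s_1}_{p,q}}\|g\|_{\dot{B}^{s_2}_{p,q}}$ from \cite{D05, RS96} and asserting that the Chemin--Lerner version is ``a simple application'' of it. Your write-up does the work that this assertion hides, and you correctly isolate the one nontrivial point: applying the stationary estimate at each fixed $t$ and then integrating in time would produce the $L^{r}(0,T;\dot{B}^{s_1+s_2-n/p}_{p,q})$ norm, not the Chemin--Lerner norm, and Minkowski's inequality only lets you pass between the two in one direction for each exponent configuration. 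Redoing Bony's decomposition with H\"older in time applied blockwise --- $\|S_{k-1}f\,\Delta_k g\|_{L^{r}(0,T;L^p)}\leq\|S_{k-1}f\|_{L^{r_1}(0,T;L^\infty)}\|\Delta_k g\|_{L^{r_2}(0,T;L^p)}$ and its analogue for the remainder --- before summing in the frequency index is the honest proof, and the hypotheses $s_1<n/p$, $s_2<n/p$, $s_1+s_2>0$ enter exactly where you place them. What your route buys is a self-contained argument in the time-dependent spaces; what the paper's route buys is brevity, at the cost of leaving the $\mathfrak{L}^r$-versus-$L^r$ subtlety unaddressed.

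One caveat: in the remainder term, the chain ``H\"older $L^p\times L^p\to L^{p/2}$, then Bernstein $\|\Delta_j h\|_{L^p}\lesssim 2^{jn/p}\|h\|_{L^{p/2}}$'' requires $p\geq2$, not merely $p\geq1$ as your parenthesis suggests, since \eqref{eq2.1} is stated for $1\leq p\leq q\leq\infty$. For $1\leq p<2$ the product law with only $s_1+s_2>0$ is not the standard statement (the usual hypothesis there is $s_1+s_2>n(2/p-1)$), so this is arguably a defect of the lemma as stated rather than of your argument; in any case Theorem \ref{th1.1} only invokes the lemma with $2\leq p<2n$, so your proof covers every case the paper needs.
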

The proof of this result is a simple application of the following
fundamental result concerning the product of two functions in the
homogeneous Besov spaces: Let $1\leq p,q\leq\infty$,
$s_1,s_2<\frac{n}{p}$ and $s_1+s_2>0$. Then there exists a constant
$C$ depending only on $p, q, s_1, s_2$ and $n$ such that
$$
  \|fg\|_{\dot{B}^{s_1+s_2-n/p}_{p,q}}\leq
  C\|f\|_{\dot{B}^{s_1}_{p,q}}\|g\|_{\dot{B}^{s_2}_{p,q}}.
$$
For details of the proof, we refer the reader to see \cite{D05} and
\cite{RS96}. The second one, whose proof can be found from e.g.
\cite{D05}, is concerned with the Cauchy problem of the heat
equation:
\begin{equation}\label{eq2.3}
\begin{cases}
  \frac{\partial u}{\partial t}-\Delta u= f(x,t), \ \
  x\in\mathbb{R}^{n}, \ t>0,\\
  u(x,0)=u_{0}(x), \ \ x\in\mathbb{R}^{n}.
\end{cases}
\end{equation}

\begin{proposition}\label{pro2.3} {\em (\cite{D05})}
Let $s\in \mathbb{R}$, $1\leq p,q,r_1\leq\infty$ and
$0<T\leq\infty$. Assume that $u_{0}\in
\dot{B}^{s}_{p,q}(\mathbb{R}^{n})$ and $f\in\mathfrak{L}^{r_1}(0,T;
\dot{B}^{s+2/r_1-2}_{p,q}(\mathbb{R}^{n}))$. Then \eqref{eq2.3} has
a unique solution $u\in\underset{r_1\leq
r\leq\infty}{\cap}\mathfrak{L}^{r}(0,T;
\dot{B}^{s+2/r}_{p,q}(\mathbb{R}^{n}))$. In addition, there exists a
constant $C>0$ depending only on $n$ such that for any $r_1\leq
r\leq\infty$, we have
\begin{equation}\label{eq2.4}
  \|u\|_{\mathfrak{L}^{r}(0,T; \dot{B}^{s+2/r}_{p,q})}\leq
  C\big(\|u_{0}\|_{\dot{B}^{s}_{p,q}}+\|f\|_{\mathfrak{L}^{r_1}(0,T;
  \dot{B}^{s+2/r_1-2}_{p,q})}\big).
\end{equation}
\end{proposition}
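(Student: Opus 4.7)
The plan is to prove Proposition \ref{pro2.3} by reducing the maximal regularity estimate to a single frequency-localized heat-kernel bound via the Littlewood--Paley decomposition, and then invoking Young's inequality in the time variable. First I would write the candidate solution by Duhamel's formula $u(t) = e^{t\Delta} u_0 + \int_0^t e^{(t-\tau)\Delta} f(\tau)\, d\tau$. Uniqueness follows automatically: the difference of any two solutions solves the same problem with zero data and zero forcing, so the estimate about to be proved will force it to vanish in every Chemin--Lerner norm.

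The heart of the argument, and what I expect to be the main obstacle, is the semigroup decay on a dyadic annulus: there exist $c, C > 0$ independent of $j$ such that for every tempered distribution $g$ whose Fourier transform is supported in a fixed dilate of $\{|\xi| \sim 2^j\}$, one has $\|e^{t\Delta} g\|_{L^p} \leq C e^{-c t 2^{2j}} \|g\|_{L^p}$. I would prove this by writing $e^{t\Delta}\Delta_j g$ as convolution with the kernel $K_{j,t} = \mathcal{F}^{-1}(e^{-t|\xi|^2}\phi(2^{-j}\xi))$, then rescaling $\xi = 2^j \eta$ to reduce to $2^{jn}$ times a fixed Schwartz kernel evaluated at $2^j x$ and damped by the overall factor $e^{-c t 2^{2j}}$ pulled out using that $\phi$ is supported away from the origin; elementary estimates then yield $\|K_{j,t}\|_{L^1} \leq C e^{-c t 2^{2j}}$ uniformly in $j$, and Young's convolution inequality closes the bound. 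The $j$-uniformity of $c$ and $C$ is the scaling-sensitive point on which the entire scheme depends.

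With this in hand, the remaining work is routine dyadic bookkeeping. For the homogeneous piece, $\|e^{t\Delta}\Delta_j u_0\|_{L^r(0,T;L^p)} \leq C \|\Delta_j u_0\|_{L^p} \cdot \|e^{-c t 2^{2j}}\|_{L^r(0,T)} \leq C\, 2^{-2j/r}\|\Delta_j u_0\|_{L^p}$, interpreted trivially at $r = \infty$. For the Duhamel piece, the pointwise estimate $\|\Delta_j u(t)\|_{L^p} \leq C\int_0^t e^{-c(t-\tau) 2^{2j}} \|\Delta_j f(\tau)\|_{L^p}\, d\tau$ is a time convolution; Young's inequality with $1/\rho := 1 + 1/r - 1/r_1$ (admissible since $r_1 \leq r \leq \infty$) gives $\|\Delta_j u\|_{L^r(0,T;L^p)} \leq C\, 2^{-2j/\rho} \|\Delta_j f\|_{L^{r_1}(0,T;L^p)}$. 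Multiplying by the weight $2^{j(s+2/r)}$ produces $2^{js}\|\Delta_j u_0\|_{L^p}$ and $2^{j(s+2/r_1-2)}\|\Delta_j f\|_{L^{r_1}(0,T;L^p)}$ respectively, after the exponent simplification $s + 2/r - 2/\rho = s + 2/r_1 - 2$. Taking the $\ell^q(\mathbb{Z})$-norm in $j$ and applying the triangle inequality delivers estimate \eqref{eq2.4}. No interpolation between distinct admissible pairs is needed, since $p$ and $q$ remain fixed while only $r$ varies over $[r_1,\infty]$; this is exactly what makes the Chemin--Lerner framework cleaner than the usual $L^r_t$ Besov setting.
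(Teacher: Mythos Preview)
Your proof is correct and follows the standard route: localize via Littlewood--Paley, use the exponential decay $\|e^{t\Delta}\Delta_j g\|_{L^p}\leq Ce^{-ct2^{2j}}\|\Delta_j g\|_{L^p}$, then apply Young's inequality in the time variable before summing in $\ell^q$. The paper itself does not prove this proposition; it simply quotes it from Danchin's lecture notes \cite{D05}, where the argument is exactly the one you outline, so there is nothing to compare beyond noting that your sketch reproduces the cited proof.
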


Next we recall an existence and uniqueness result for an abstract
operator equation in a generic Banach space. For the proof we refer
the reader to see Lemari\'{e}-Rieusset \cite{L02}.

\begin{proposition}\label{pro2.4} {\em (\cite{L02})}
Let $\mathcal{X}$ be a Banach space and
$\mathbf{B}:\mathcal{X}\times\mathcal{X}\rightarrow\mathcal{X}$ is a
bilinear bounded operator, $\|\cdot\|_{\mathcal{X}}$ being the
$\mathcal{X}$-norm. Assume that for any $u_{1},u_{2}\in
\mathcal{X}$, we have $\|\mathbf{B}(u_{1},u_{2})\|_{\mathcal{X}}\leq
C_{0}\|u_{1}\|_{\mathcal{X}}\|u_{2}\|_{\mathcal{X}}$. Then for any
$y\in \mathcal{X}$ such that $\|y\|_{\mathcal{X}}\leq
\varepsilon<\frac{1}{4C_{0}}$, the equation $u=y+\mathbf{B}(u,u)$
has a solution $u$ in $\mathcal{X}$. Moreover, this solution is the
only one such that $\|u\|_{\mathcal{X}}\leq 2\varepsilon$, and
depends continuously on $y$ in the following sense: if
$\|\widetilde{y}\|_{\mathcal{X}}\leq \varepsilon$,
$\widetilde{u}=\widetilde{y}+\mathbf{B}(\widetilde{u},\widetilde{u})$
and $\|\widetilde{u}\|_{\mathcal{X}}\leq 2\varepsilon$, then
$\|u-\widetilde{u}\|_{\mathcal{X}}\leq \frac{1}{1-4\varepsilon
C_{0}}\|y-\widetilde{y}\|_{\mathcal{X}}$.
\end{proposition}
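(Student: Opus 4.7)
The plan is a direct application of the Banach fixed point theorem to the map $T(u) := y + \mathbf{B}(u,u)$ on the closed ball $\overline{B_{2\varepsilon}} := \{u \in \mathcal{X} : \|u\|_{\mathcal{X}} \leq 2\varepsilon\}$, which is a complete metric space under the distance induced by $\|\cdot\|_{\mathcal{X}}$. The working radius $2\varepsilon$ (rather than $\varepsilon$) is what makes both the self-mapping property and the strict contraction property follow cleanly from the single quantitative hypothesis $4C_0\varepsilon < 1$.

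First I would verify that $T$ maps $\overline{B_{2\varepsilon}}$ into itself. For $u\in \overline{B_{2\varepsilon}}$, the bilinear bound together with $\|y\|_{\mathcal{X}} \leq \varepsilon$ gives
$$\|T(u)\|_{\mathcal{X}} \leq \|y\|_{\mathcal{X}} + C_0\|u\|_{\mathcal{X}}^{2} \leq \varepsilon + 4C_0\varepsilon^{2} = \varepsilon(1+4C_0\varepsilon) \leq 2\varepsilon.$$
Next I would show that $T$ is a strict contraction on this ball. The crucial algebraic step is the bilinear identity
$$\mathbf{B}(u_1,u_1) - \mathbf{B}(u_2,u_2) = \mathbf{B}(u_1-u_2,\,u_1) + \mathbf{B}(u_2,\,u_1-u_2),$$
which yields, for $u_1,u_2 \in \overline{B_{2\varepsilon}}$,
$$\|T(u_1)-T(u_2)\|_{\mathcal{X}} \leq C_0(\|u_1\|_{\mathcal{X}}+\|u_2\|_{\mathcal{X}})\|u_1-u_2\|_{\mathcal{X}} \leq 4C_0\varepsilon\,\|u_1-u_2\|_{\mathcal{X}},$$
with Lipschitz constant $k := 4C_0\varepsilon < 1$. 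The Banach fixed point theorem then produces a unique $u \in \overline{B_{2\varepsilon}}$ with $u = T(u) = y + \mathbf{B}(u,u)$, which simultaneously yields existence and the stated uniqueness among elements of norm at most $2\varepsilon$. (Equivalently, one can iterate Picard: set $u^{(0)} := y$ and $u^{(n+1)} := y + \mathbf{B}(u^{(n)},u^{(n)})$, check inductively that $\|u^{(n)}\|_{\mathcal{X}} \leq 2\varepsilon$, and observe $\|u^{(n+1)}-u^{(n)}\|_{\mathcal{X}} \leq k\|u^{(n)}-u^{(n-1)}\|_{\mathcal{X}}$ so that $\{u^{(n)}\}$ is Cauchy with limit $u$.)

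For the continuous dependence estimate, I would subtract the two fixed-point identities $u = y+\mathbf{B}(u,u)$ and $\widetilde{u} = \widetilde{y}+\mathbf{B}(\widetilde{u},\widetilde{u})$ and apply the same bilinear decomposition to obtain
$$u - \widetilde{u} = (y-\widetilde{y}) + \mathbf{B}(u-\widetilde{u},\,u) + \mathbf{B}(\widetilde{u},\,u-\widetilde{u}).$$
Taking $\mathcal{X}$-norms and using $\|u\|_{\mathcal{X}},\|\widetilde{u}\|_{\mathcal{X}} \leq 2\varepsilon$ gives
$$\|u-\widetilde{u}\|_{\mathcal{X}} \leq \|y-\widetilde{y}\|_{\mathcal{X}} + 4C_0\varepsilon\,\|u-\widetilde{u}\|_{\mathcal{X}}.$$
Since $4C_0\varepsilon < 1$, the last term may be absorbed on the left, producing the claimed bound with constant $(1-4C_0\varepsilon)^{-1}$.

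There is no serious obstacle in this argument; it is a textbook contraction mapping. The only points requiring any attention are writing $\mathbf{B}(u_1,u_1)-\mathbf{B}(u_2,u_2)$ correctly via bilinearity (without assuming symmetry in the two arguments) and selecting the ball radius $2\varepsilon$ so that the self-mapping inequality $\varepsilon + 4C_0\varepsilon^{2} \leq 2\varepsilon$ is exactly equivalent to the stated smallness hypothesis $4C_0\varepsilon \leq 1$.
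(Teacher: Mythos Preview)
Your argument is correct and is the standard Picard/Banach contraction proof of this classical lemma. The paper does not give its own proof of Proposition~2.4 but simply cites Lemari\'e-Rieusset~\cite{L02}, where essentially the same iteration argument appears; so there is nothing to compare beyond noting that your write-up matches the standard reference.
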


We are now ready to give the proof of Theorem 1.1. Let $p$ and $q$
be as in Theorem 1.1, i.e., $2\leq p<2n$ and $1\leq q\leq\infty$,
and let $r$ be as in \eqref{eq1.4}, i.e., $1<r\leq\infty$. We choose
a number $2<r_1\leq2r$ such that
$\frac{2}{r_1}+\frac{n}{p}>\frac{3}{2}$. For $T>0$ to be specified
later, we set $\mathcal{X}_{T}=\mathfrak{L}^{r_1}(0,T;
\dot{B}^{-2+n/p+2/r_1}_{p,q}(\mathbb{R}^{n}))$. Given $(v,w)\in
\mathcal{X}_{T}$, we define $\mathcal{G}(v,w)=(\bar{v},\bar{w})$ to
be the solution of the following initial value problem:
\begin{align}\label{eq2.5}
  &\partial_{t}\bar{v}-\Delta
  \bar{v}=-\nabla\cdot(v\nabla(-\Delta)^{-1}(w-v)),
  \ \ \ \bar{v}(x,0)=v_{0}(x),\\
\label{eq2.6}
  &\partial_{t}\bar{w}-\Delta
  \bar{w}=\nabla\cdot(w\nabla(-\Delta)^{-1}(w-v)), \ \ \ \bar{w}(x,0)=w_{0}(x).
\end{align}
Obviously, $(v,w)$ is a solution of \eqref{eq1.3} if and only if it
is a fixed point of $\mathcal{G}$.

\begin{lemma}\label{le2.5} Let $(v, w)\in
\mathcal{X}_{T}$. Then  $(\bar{v}, \bar{w})\in \mathcal{X}_{T}$.
Moreover, there exists a constant $C_0>0$ such that
\begin{align}\label{eq2.7}
  &\|\bar{v}\|_{\mathcal{X}_{T}}\leq
  \|e^{t\Delta}v_{0}\|_{\mathcal{X}_{T}}+C_0\|(v,w)\|_{\mathcal{X}_{T}}^{2},\\
\label{eq2.8}
  &\|\bar{w}\|_{\mathcal{X}_{T}}\leq
  \|e^{t\Delta}w_{0}\|_{\mathcal{X}_{T}}+C_0\|(v,w)\|_{\mathcal{X}_{T}}^{2}.
\end{align}
Here $e^{t\Delta}$ is the heat operator with kernel $G(x,t)=(4\pi
t)^{-n/2}\exp(-\frac{|x|^2}{4t})$.
\end{lemma}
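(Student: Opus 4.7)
The plan is to apply Duhamel's formula
\[
\bar{v}(t) = e^{t\Delta}v_{0} - \int_{0}^{t} e^{(t-\tau)\Delta}\,\nabla\cdot\bigl(v\,\nabla(-\Delta)^{-1}(w-v)\bigr)(\tau)\,d\tau,
\]
and split $\|\bar{v}\|_{\mathcal{X}_{T}} \leq \|e^{t\Delta}v_{0}\|_{\mathcal{X}_{T}} + \|\mathrm{Duhamel}\|_{\mathcal{X}_{T}}$. For the Duhamel piece I would invoke Proposition \ref{pro2.3} with $s = -2 + n/p$, taking the proposition's source exponent to be $r_{1}/2$ (legitimate, since $r_{1}>2$ makes $r_{1}/2 > 1$) and the target exponent equal to our $r_{1}$ (so $r_{1}/2 \leq r_{1}$ holds trivially). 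This reduces the matter to estimating the forcing in $\mathfrak{L}^{r_{1}/2}(0,T;\dot{B}^{-4+n/p+4/r_{1}}_{p,q})$.

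To bound the forcing, I would peel off the Fourier multipliers using the fact recalled in the excerpt that a symbol smooth and homogeneous of degree $k$ outside the origin maps $\dot{B}^{\alpha}_{p,q}$ into $\dot{B}^{\alpha-k}_{p,q}$, a property that trivially extends to Chemin--Lerner norms in time. Applied with $k=+1$ to $\nabla\cdot$ and $k=-1$ to $\nabla(-\Delta)^{-1}$, this reduces the task to bounding the product $v\cdot\nabla(-\Delta)^{-1}(w-v)$ in $\mathfrak{L}^{r_{1}/2}(0,T;\dot{B}^{-3+n/p+4/r_{1}}_{p,q})$. I would then apply Lemma \ref{le2.2} with $s_{1} = -2 + n/p + 2/r_{1}$ for $v$, $s_{2} = -1 + n/p + 2/r_{1}$ for $\nabla(-\Delta)^{-1}(w-v)$, and both time exponents equal to $r_{1}$ (so that $\tfrac{1}{r_{1}}+\tfrac{1}{r_{1}} = \tfrac{2}{r_{1}}$ matches $r_{1}/2$), yielding product regularity $s_{1}+s_{2}-n/p = -3 + n/p + 4/r_{1}$, exactly the target.

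The crucial check is the admissibility of Lemma \ref{le2.2}: $s_{1} < n/p$ reduces to $r_{1} > 1$, $s_{2} < n/p$ reduces to $r_{1} > 2$, and $s_{1}+s_{2} > 0$ reduces to $\tfrac{n}{p}+\tfrac{2}{r_{1}} > \tfrac{3}{2}$, which is precisely the condition built into the choice of $r_{1}$. Assembling the estimates yields
\[
\|\bar{v}\|_{\mathcal{X}_{T}} \leq \|e^{t\Delta}v_{0}\|_{\mathcal{X}_{T}} + C_{0}\,\|v\|_{\mathcal{X}_{T}}\,\|w-v\|_{\mathcal{X}_{T}} \leq \|e^{t\Delta}v_{0}\|_{\mathcal{X}_{T}} + C_{0}\,\|(v,w)\|_{\mathcal{X}_{T}}^{2},
\]
which is \eqref{eq2.7}; the bound \eqref{eq2.8} for $\bar{w}$ follows from the same computation, only the sign in front of the drift differs. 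The main obstacle I anticipate is not any single step but the simultaneous bookkeeping of indices: Proposition \ref{pro2.3} and the quadratic structure force the pair $(r_{1}/2, r_{1})$ with a regularity gap of exactly $2$, while the three hypotheses of Lemma \ref{le2.2} collapse onto the single sharp inequality $\tfrac{n}{p}+\tfrac{2}{r_{1}} > \tfrac{3}{2}$, so the whole argument hinges on this one relation in the selection of $r_{1}$.
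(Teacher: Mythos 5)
Your proposal is correct and follows essentially the same route as the paper: Duhamel's formula, Proposition \ref{pro2.3} with source exponent $r_1/2$ and regularity $-4+n/p+4/r_1$, the homogeneous-multiplier mapping property to peel off $\nabla\cdot$ and $\nabla(-\Delta)^{-1}$, and Lemma \ref{le2.2} with $s_1=-2+n/p+2/r_1$, $s_2=-1+n/p+2/r_1$, whose hypotheses reduce to $r_1>2$ and $\tfrac{n}{p}+\tfrac{2}{r_1}>\tfrac{3}{2}$ exactly as in the paper's estimate \eqref{eq2.9}--\eqref{eq2.10}.
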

\begin{proof}
By Duhamel principle, \eqref{eq2.5} is equivalent to the following
integral equation:
$$
  \bar{v}(t)=e^{t\Delta}v_{0}-\int_{0}^{t}e^{(t-\tau)\Delta}\nabla\cdot(v\nabla(-\Delta)^{-1}(w-v))(\tau)d\tau.
$$
Since $2\leq p<2n$, $2<r_1<\infty$ and
$\frac{n}{p}+\frac{2}{r_1}>\frac{3}{2}$, by choosing
$s_{1}=-2+\frac{n}{p}+\frac{2}{r_1}$ and
$s_{2}=-1+\frac{n}{p}+\frac{2}{r_1}$ in Lemma \ref{le2.2} we get
\begin{align}\label{eq2.9}
  \|\nabla\cdot(&v\nabla(-\Delta)^{-1}(w-v))\|_{\mathfrak{L}^{r_1/2}(0,T;
  \dot{B}^{-4+n/p+4/r_1}_{p,q})}\nonumber\\&\leq C\|v\nabla(-\Delta)^{-1}(w-v)\|_{\mathfrak{L}^{r_1/2}(0,T;
  \dot{B}^{-3+n/p+4/r_1}_{p,q})}\nonumber\\
  &\leq C\|v\|_{\mathfrak{L}^{r_1}(0,T;
  \dot{B}^{-2+n/p+2/r_1}_{p,q})}\|\nabla(-\Delta)^{-1}(w-v)\|_{\mathfrak{L}^{r_1}(0,T;
  \dot{B}^{-1+n/p+2/r_1}_{p,q})}\nonumber\\
  &\leq C\|(v,w)\|_{\mathfrak{L}^{r_1}(0,T;
  \dot{B}^{-2+n/p+2/r_1}_{p,q})}^{2}.
\end{align}
Hence, by using Proposition \ref{pro2.3} we conclude that there
exists a positive constant $C_{0}$ such that
\begin{align}\label{eq2.10}
  \|\bar{v}\|_{\mathcal{X}_{T}}&\leq
  \|e^{t\Delta}v_{0}\|_{\mathcal{X}_{T}}+C\|\nabla\cdot(v\nabla(-\Delta)^{-1}(w-v))\|_{\mathfrak{L}^{r_1/2}(0,T;
  \dot{B}^{-4+n/p+4/r_1}_{p,q})}\nonumber\\
  &\leq
  \|e^{t\Delta}v_{0}\|_{\mathcal{X}_{T}}+C_{0}\|(v,w)\|_{\mathfrak{L}^{r_1}(0,T;
  \dot{B}^{-2+n/p+2/r_1}_{p,q})}^{2}.
\end{align}
This proves \eqref{eq2.7}. The proof of \eqref{eq2.8} is similar.
\end{proof}

{\em Proof of Theorem 1.1}:\ \ The above lemma ensures that
$\mathcal{G}$ is well-defined and maps $\mathcal{X}_{T}$ into
itself. Moreover, from \eqref{eq2.7} and \eqref{eq2.8} we see that
for any $(v,w)\in \mathcal{X}_{T}$ and $(\bar{v},\bar{w})=
\mathcal{G}(v,w)$,
\begin{align}\label{eq2.11}
  \|(\bar{v},\bar{w})\|_{\mathcal{X}_{T}}\leq
  \|(e^{t\Delta}v_{0}, e^{t\Delta}w_{0})\|_{\mathcal{X}_{T}}
  + C_{0}\|(v,w)\|_{\mathcal{X}_{T}}^2.
\end{align}

\textbf{Existence}.\ \ We first prove global existence for small
initial data. For this purpose we choose $T=\infty$. By Proposition
\ref{pro2.4} and \eqref{eq2.11} it is easy to see that if
$\|(e^{t\Delta}v_{0},
e^{t\Delta}w_{0})\|_{\mathcal{X}_{\infty}}\leq\varepsilon$ and
$\varepsilon>0$ is so small that $4C_0\varepsilon\leq 1$, then
$\mathcal{G}$ has a fixed point in the closed ball
$\|(v,w)\|_{\mathcal{X}_{\infty}}\leq 2\varepsilon$ in
$\mathcal{X}_{\infty}$. By Proposition \ref{pro2.3} we see that the
condition $\|(e^{t\Delta}v_{0},
e^{t\Delta}w_{0})\|_{\mathcal{X}_{\infty}}\leq\varepsilon$ is
satisfied if $\|(v_{0}, w_{0})\|_{\dot{B}^{-2+n/p}_{p,q}}$ is small
enough. Indeed, by Proposition \ref{pro2.3}, there exists a positive
constant $C_{1}$ depending only on $n$ such that
$\|(e^{t\Delta}v_{0}, e^{t\Delta}w_{0})\|_{\mathcal{X}_{\infty}}\leq
C_{1}\|(v_{0}, w_{0})\|_{\dot{B}^{-2+n/p}_{p,q}}$. Hence, if we
assume that $\|(v_{0}, w_{0})\|_{\dot{B}^{-2+n/p}_{p,q}}\leq
C_1^{-1}\varepsilon$, then we have $\|(e^{t\Delta}v_{0},
e^{t\Delta}w_{0})\|_{\mathcal{X}_{\infty}}\leq \varepsilon$. This
proves global existence for small initial data.

Next we prove local existence for large initial data. For this
purpose we split $v_{0}$ into a sum as follows:
$\widehat{v}_{0}(\xi)=\widehat{v}_{0}1_{\{|\xi|>2^{N}\}}+\widehat{v}_{0}1_{\{|\xi|\leq2^{N}\}}:
=\widehat{v_{01}}+\widehat{v_{02}}$, where $1_{\mathcal{D}}$
represents the characteristic function on the domain $\mathcal{D}$.
Similarly, we split $w_{0}$ as
$\widehat{w_{0}}=\widehat{w_{01}}+\widehat{w_{02}}$. Since $2\leq
p<2n$, it is easy to see that if $N\in\mathbb{Z}^{+}$ is
sufficiently large then
$C_{1}\|(v_{01},w_{01})\|_{\dot{B}^{-2+n/p}_{p,q}}\leq
\frac{1}{2}\varepsilon$. Choosing a such $N$ and fixing it, we get
\begin{equation}\label{eq2.12}
  \|(e^{t\Delta}v_{0},
  e^{t\Delta}w_{0})\|_{\mathcal{X}_{T}}\leq
  \frac{1}{2}\varepsilon+\|(e^{t\Delta}v_{02},
  e^{t\Delta}w_{02})\|_{\mathcal{X}_{T}}.
\end{equation}
Applying the Bernstein inequality,
\begin{align*}
  \|&(e^{t\Delta}v_{02},
  e^{t\Delta}w_{02})\|_{\mathcal{X}_{T}}=\|(e^{t\Delta}v_{02},
  e^{t\Delta}w_{02})\|_{\mathfrak{L}^{r_1}(0,T;
  \dot{B}^{-2+n/p+2/r_1}_{p,q})}\\
  &\lesssim 2^{(2N)/r_1}\|(e^{t\Delta}v_{02},
  e^{t\Delta}w_{02})\|_{\mathfrak{L}^{r_1}(0,T;
  \dot{B}^{-2+n/p}_{p,q})}\leq C_{2}2^{(2N)/r_1}T^{1/r_1}\|(v_{0},
  w_{0})\|_{\dot{B}^{-2+n/p}_{p,q}}.
\end{align*}
Hence, if we choose $T$ small enough such that
$C_{2}2^{(2N)/r_1}T^{1/r_1}\|(v_{0},w_{0})\|_{\dot{B}^{-2+n/p}_{p,q}}\leq\frac{1}{2}\varepsilon$,
then $\|(e^{t\Delta}v_{02}$,
$e^{t\Delta}w_{02})\|_{\mathcal{X}_{T}}\leq \frac{\varepsilon}{2}$.
This result together with \eqref{eq2.12} yields that
$\|(e^{t\Delta}v_{0},
e^{t\Delta}w_{0})\|_{\mathcal{X}_{T}}\leq\varepsilon$. By applying
Proposition \ref{pro2.4} again, we obtain a fixed point of
$\mathcal{G}$ in the closed ball $\|(v,w)\|_{\mathcal{X}_{T}}\leq
2\varepsilon$ in $\mathcal{X}_{T}$, and concludes the proof of local
existence of solution.

\textbf{Regularity}.\ \ Note that if $(v,w)\in \mathcal{X}_{T}$ is a
solution of \eqref{eq1.1}, then we can proceed in the same way as in
the proof of Lemma \ref{le2.5} to obtain that
$$
  \nabla\cdot(v\nabla(-\Delta)^{-1}(w-v)), \nabla\cdot(w\nabla(-\Delta)^{-1}(w-v))\in  \mathfrak{L}^{r_1/2}(0,T;
  \dot{B}^{-4+n/p+4/r_1}_{p,q}(\mathbb{R}^{n})).
$$
By Proposition \ref{pro2.3}, for any $\frac{r_1}{2}\leq r\leq\infty$
we have $(v, w)\in \mathfrak{L}^{r}(0,T;
\dot{B}^{-2+n/p+2/r}_{p,q}(\mathbb{R}^{n}))$. Moreover, if
$(v_{0},w_{0})$ belongs to the closure of
$\mathcal{S}(\mathbb{R}^n)$ in the space
$\dot{B}^{-2+n/p}_{p,q}(\mathbb{R}^{n})$, then $(v, w)\in C([0,T),
\dot{B}^{-2+n/p}_{p,q}(\mathbb{R}^{n}))$.

\textbf{Uniqueness}.\ \ Let $(v,w)$ and $(\tilde{v},\tilde{w})$ be
two solutions of \eqref{eq1.1} in $\mathcal{X}_{T}$ associated with
the initial data $(v_{0},w_{0})$ and $(\tilde{v}_{0},
\tilde{w}_{0})$, respectively. Set $V=v-\tilde{v}$, $W=w-\tilde{w}$.
Then $(V,W)$ satisfies the following equations:
\begin{equation*}
\begin{cases}
  &\partial_{t}V-\Delta
  V=-\nabla\cdot(V\nabla(-\Delta)^{-1}(w-v))-\nabla\cdot(\tilde{v}\nabla(-\Delta)^{-1}(W-V)),\\
  &\partial_{t}W-\Delta
  W=\nabla\cdot(W\nabla(-\Delta)^{-1}(w-v))+\nabla\cdot(\tilde{w}\nabla(-\Delta)^{-1}(W-V)),\\
  &V(x,0)=V_{0}(x)=v_{0}(x)-\tilde{v}_{0}(x),\
  \ W(x,0)=W_{0}(x)=w_{0}(x)-\tilde{w}_{0}(x).
\end{cases}
\end{equation*}
Proceeding in the same way as in the proof of Lemma \ref{le2.5}, we
can prove that
\begin{align*}
  \|-\nabla\cdot&(V\nabla(-\Delta)^{-1}(w-v))-\nabla\cdot(\tilde{v}\nabla(-\Delta)^{-1}(W-V))\|_{\mathcal{X}_{T}}\\
  &\leq C
  \big(\|v\|_{\mathcal{X}_{T}}+\|\tilde{v}\|_{\mathcal{X}_{T}}+\|w\|_{\mathcal{X}_{T}}\big)
  \|(V,W)\|_{\mathcal{X}_{T}}\\
  \|\nabla\cdot&(W\nabla(-\Delta)^{-1}(w-v))+\nabla\cdot(\tilde{w}\nabla(-\Delta)^{-1}(W-V))\|_{\mathcal{X}_{T}}\\
  &\leq C
  \big(\|v\|_{\mathcal{X}_{T}}+\|w\|_{\mathcal{X}_{T}}+\|\tilde{w}\|_{\mathcal{X}_{T}}\big)
  \|(V,W)\|_{\mathcal{X}_{T}}.
\end{align*}
Hence, by Proposition \ref{pro2.3},
\begin{align*}
  \|(V,W )\|_{\mathcal{X}_{T}}&\leq C_{1}\|(V_{0},
  W_{0})\|_{\dot{B}^{-2+n/p}_{p,q}}\\&+ C_{0}
  \big(\|v\|_{\mathcal{X}_{T}}+\|\tilde{v}\|_{\mathcal{X}_{T}}+\|w\|_{\mathcal{X}_{T}}+\|\tilde{w}\|_{\mathcal{X}_{T}}\big)
  \|(V,W)\|_{\mathcal{X}_{T}}.
\end{align*}
Let
$M(T):=C_{0}\big(\|v\|_{\mathcal{X}_{T}}+\|\tilde{v}\|_{\mathcal{X}_{T}}
+\|w\|_{\mathcal{X}_{T}}+\|\tilde{w}\|_{\mathcal{X}_{T}}\big)$. By
absolute continuity of the Lebesgue integral, we have that $M(T)$
converges to zero as $T\to 0^+$. Hence, if we choose $T_{1}$
sufficiently small such that $M(T_{1})\leq\frac{1}{2}$, then
\begin{equation*}
  \|(V,W )\|_{\mathcal{X}_{T}}\leq 2C_{1}\|(V_{0},
  W_{0})\|_{\dot{B}^{-2+n/p}_{p,q}}.
\end{equation*}
Repeating this argument step by step on the intervals $[0,T_{1})$,
$[T_{1}, 2T_{1})$, $\ldots$, we finally get a constant $C=C_T$ after
a finite steps such that $\|(V,W)\|_{\mathcal{X}_{T}}\leq C\|(V_{0},
W_{0})\|_{\dot{B}^{-2+n/p}_{p,q}}$. This proves \eqref{eq1.5} which
implies the uniqueness of solutions. The proof of Theorem
\ref{th1.1} is complete.

\textbf{Acknowledgment.} This work is supported by the China
National Natural Science Fundation under the Grant No.10771223.

\end{document}